\newcommand{\R}{\mathbb{R}}
\newcommand{\E}{\mathbb{E}}
\renewcommand{\H}{\mathbb{H}}
\renewcommand{\P}{\mathbb{P}}
\newcommand{\N}{\mathbb{N}}
\newcommand{\Z}{\mathbb{Z}}
\newcommand{\indic}{{\bf 1}}
\newcommand{\ind}[1]{{\indic_{\{#1\}}}}
\newcommand{\epst}{{\widetilde\varepsilon}}
\newcommand{\Fr}{\mathcal F}
\newcommand{\eps}{\varepsilon}
\newcommand{\dm}{\begin{pmatrix}} 
\newcommand{\fm}{\end{pmatrix}}
\newcommand{\ddm}{\begin{vmatrix}} 
\newcommand{\fdm}{\end{vmatrix}}
\newcommand{\s}{\,|\,} 
\newcommand{\st}{\,:\,} 
\newtheorem{lemma}{Lemma}
\newtheorem{proposition}{Proposition}
\newtheorem*{remark*}{Remark}
\author[V. Sidoravicius]{Vladas Sidoravicius}
\address{Courant Institute of Mathematical Sciences, New York\\
NYU-ECNU Institute of Mathematical Sciences at NYU Shanghai\\
Cemaden, São José dos Campos.}
\email{vs1138@nyu.edu}
\author[L. Tournier]{Laurent Tournier}
\address{LAGA, Universit\'e Paris 13, Sorbonne Paris Cit\'e, CNRS, UMR 7539, 93430 Villetaneuse, France. 
 }
\email{tournier@math.univ-paris13.fr}
\title[]{Note on a one-dimensional system of annihilating particles}
\begin{document}

\ 
\vspace{-2cm}
\maketitle

{\footnotesize \noindent{\slshape\bfseries Abstract.} 
We consider a system of annihilating particles where particles start from the points 
of a Poisson process on either the full-line or positive half-line and move at constant 
i.i.d.\ speeds until collision. When two particles collide, they annihilate. We assume 
the law of speeds to be symmetric. We prove almost sure annihilation of positive-speed 
particles started from the positive half-line, and existence of a regime of survival of 
zero-speed particles on the full-line in the case when speeds can only take 3 values. 
We also state open questions. 
}

\section{Introduction}
   \label{s:intro}

Let us first define informally the model that we are working on. Particles are released from the locations of a Poisson point process on either the full-line $\R$ or the half-line $\R_+$ with i.i.d.\ velocities sampled from a distribution $\mu$ with bounded support. Each particle moves at constant velocity, and when two particles collide, they annihilate. 
We are interested in the possible survival of some particles forever. 

In this note, we treat the case of a symmetric distribution $\mu$ (i.e.\ $v\stackrel{\rm (d)}=-v$ if $v$ has law $\mu$) for particles starting from the half-line, and the case of a symmetric distribution $\mu$ on $\{-1,0,1\}$ for particles starting from the full-line. 
For symmetric distributions $\mu$, and particles starting on the full-line, only 0-speed particles may survive. The question on the half-line is more intriguing. In Section~\ref{sec:symmetric}, we present an argument based on symmetry to conclude that positive-velocity particles annihilate almost surely. We believe, but could not prove, that negative-velocity particles have positive chance to survive.
On the full-line, we are specifically interested in the symmetric discrete velocity case $\mu=\frac{1-p}2\delta_{-1}+p\delta_0+\frac{1-p}2\delta_1$, where $p$ is a parameter in $[0,1]$. In this case, numerical evidence and predictions from physicists (see below) suggest that there is a phase transition: for~$p$ small (approximately for $p<0.25$), every particle annihilates almost surely but every site is still crossed by some particles at arbitrarily large times, while for larger values of $p$, 0-speed particles manage to survive with positive probability. In Section~\ref{sec:3speeds}, we present a simple proof of survival at $p>1/3$ and discuss some extensions. We could not prove that particles die at small $p$, and leave it as another open question (see the discussion below regarding this question).

Although this particle system was little known by mathematicians until recently, it turns out to have been introduced in the physics community in the 1990's (cf.~\cite{ben1993decay}), where it is known as \emph{ballistic annihilation}, and has been an active subject of research at the time. Note however that the special case of two speeds was considered earlier (\cite{elskens1985annihilation,krug1988universality}), cf.\ also, for a mathematical treatment~\cite{belitsky1995ballistic}. In the case of a continuous speed distribution (with Poisson initial distribution on the full-line), several heuristics and numerical simulations suggest a polynomial decay of the density of particles and of their average speed, and universal relations between their exponents (cf.~\cite{ben1993decay,krapivsky2001ballistic, trizac2002kinetics}). For discrete velocity distributions, beyond the case of two speeds, the cases of three speeds (as in the present paper) and of four speeds have been considered. In the symmetric three-speed case $\mu=\frac{1-p}2\delta_{-1}+p\delta_0+\frac{1-p}2\delta_1$, Krapivsky, Redner and Leyvraz~\cite{krapivsky1995ballistic} infered the critical value $p=\frac14$ from heuristic and numerical arguments, together with exponents for the decay of density $c_v(t)$ of each speed at time~$t$: at $p<\frac14$, one should have $c_0(t)\approx t^{-1}$ and $c_{+1}(t)\approx t^{-1/2}$; at $p=\frac14$, $c_0(t)\approx c_{+1}(t)\approx t^{-2/3}$; and at $p>\frac14$, $c_0(t)\to C$ and $c_\pm(t)$ should decay faster than polynomially. Essentially at the same time, Droz, Frachebourg, Piasecki and Rey~\cite{droz1995ballistic}, using computation of Piasecki~\cite{piasecki1995ballistic}, strongly supported these asymptotics by showing that exact computations can be done in this and possibly other discrete cases, furthermore giving explicit prefactors: 
\begin{align*}
	\text{if }p<\frac14,\ & 
		c_0(t)\sim\frac{2p}{(1-4p)\pi}t^{-1}\text{ and }c_{+1}(t)\sim\sqrt{\frac1{\pi}\Big(\frac14-p\Big)}\cdot t^{-1}\\
	\text{if }p=\frac14,\ &   
		c_0(t)\sim\frac{2^{2/3}}{4\Gamma(2/3)^2}t^{-2/3}\text{ and }c_{+1}(t)\sim\Big(\frac{2^{2/3}}{8\Gamma(2/3)^2}+\frac3{8\Gamma(1/3)}\Big)t^{-2/3}\\
	\text{if }p>\frac14,\ &
		c_0(t)\to 2-\frac1{\sqrt p}\text{ and }c_{+1}(t)\sim At^{-3/2}e^{-cut},
\end{align*}
where $A$ and $u$ are also explicit functions of $p$. These very precise results come from the exact resolution of an involved differential equation satisfied by the probability density function of the interdistance between neighbor particles at time $t$ conditioned on having given speeds. The derivation and resolution of this equation is not entirely written in a rigorous way but it is not unlikely it might be turned to a formal proof. Unfortunately though, as Krapivsky et al.~\cite{krapivsky1995ballistic} already pointed out, the previous explicit resolution is ``a formidable task'' and there is still need for methods ``that would provide better intuitive insights into the intriguing qualitative features of ballistic annihilation''. In particular, this computation, even if indeed formally correct, does not seem to give an intuitive proof of survival for any small a value of $p$. 

On a side note, coalescing processes also have interesting ballistic counterparts, cf. for instance the model from~\cite{ermakov1998some}, where speeds are resampled at collision, or the physically relevant model of ballistic aggregation, where particles have a speed and mass, and coalescence occurs with conservation of mass and momentum (i.e.\ particles with mass and speed respectively given by $(m_1,v_1)$ and $(m_2,v_2)$ produce $(m_1+m_2,(m_1v_1+m_2v_2)/(m_1+m_2))$), cf.~\cite{carnevale1990statistics}, \cite{martin1994one}. An account of related models can be found in the review~\cite{redner1997scaling}. 

The interest for ballistic annihilating particles has known a recent revival, among the mathematical community, after a closely related ``bullet problem'' appeared as a challenge by LTK software engineer David Wilson on IBM website~\cite{ibm}. In the ``bullet'' model, particles are released from 0 at times of a Poisson point process, with i.i.d.\ positive velocities $(w_k)_{k\ge1}$, and annihilate on collision. It was conjectured that, if the law of speeds is uniform on $[0,1]$,  there is a velocity $v_c\in(0,1)$ such that bullets slower than $v_c$ annihilate almost surely, while faster ones may survive. This question was apparently not considered in physics papers. Compared to ballistic annihilation started on $\R_+$, the ``bullet'' model corresponds to switching time with space, which results in replacing speeds $w_k$ by $v_k=1/w_k$. Our results on the half-line can therefore be rephrased for this other model. Note that the present version of the model enjoys invariance by linear transforms of speeds and symmetry properties that make it more suitable for study than the ``bullet'' version. 

We learned before publishing that other authors~\cite{junge} independently obtained results similar to ours. The physics literature on ballistic annihilation mentioned above, in particular Reference~\cite{droz1995ballistic}, was pointed to us after we finished the first version of this paper. 

\begin{figure}
 \begin{center}
  \includegraphics[width=15cm]{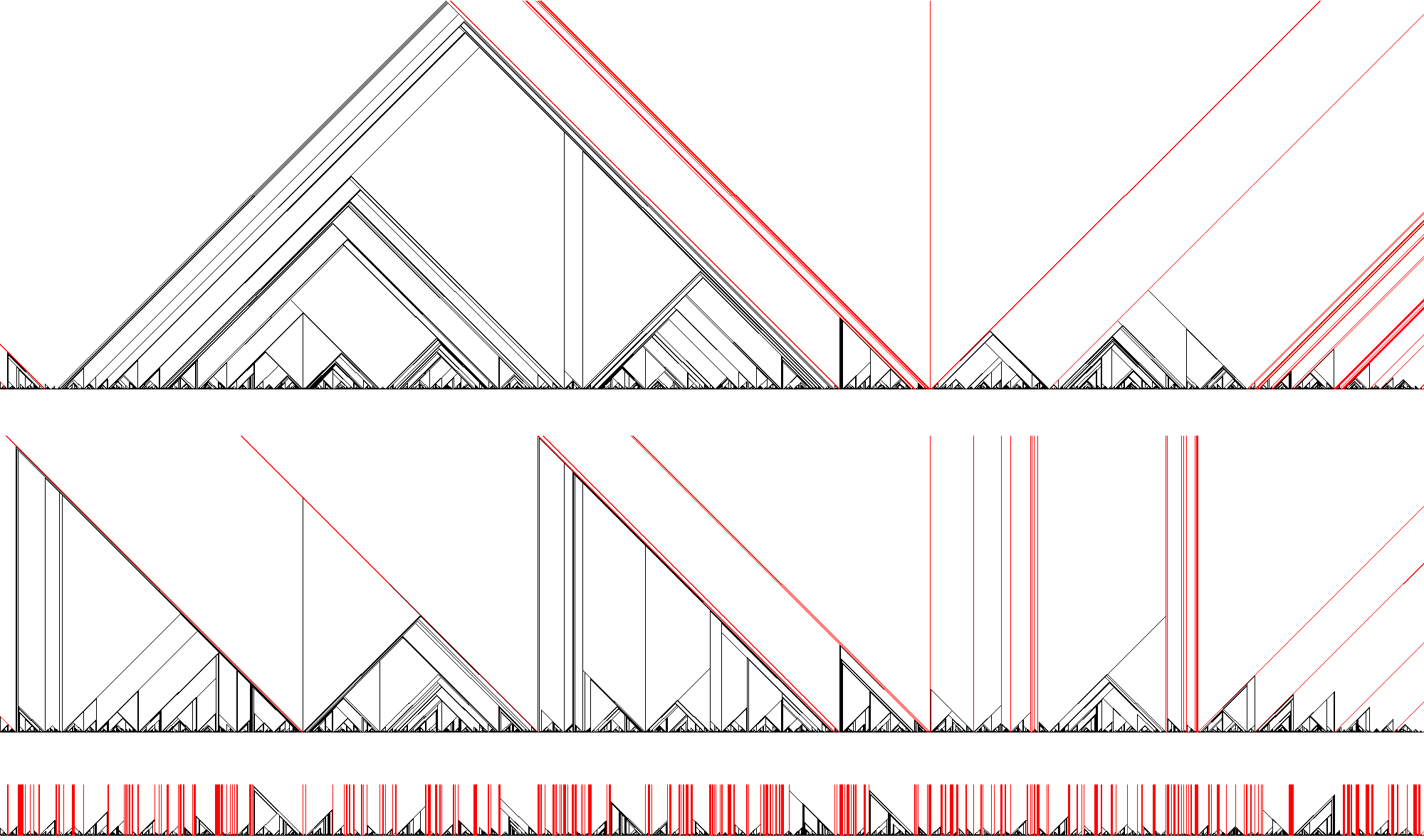}
 \caption{ \label{fig:p025}	Simulations for $\mu=\frac{1-p}2\delta_{-1}+p\delta_0+\frac{1-p}2\delta_1$, with $p=0.24$ (top), $p=0.25$ (middle) and $p=0.26$ (bottom), with the horizontal direction as space and vertical (up) as time. The bottom picture contains 10 million particles. Note that, as can be guessed, the pictures are coupled by removing some 0-speed particles from the lower configuration, hence pictures have slightly different horizontal scale (or Poisson intensity).}
 \end{center}
 \end{figure}

\begin{figure}
 \begin{center}
  \includegraphics[width=12cm]{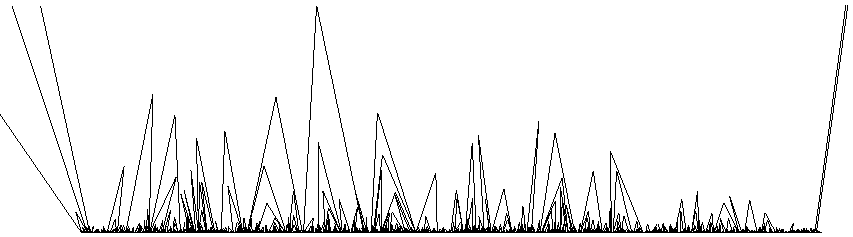}
 \caption{ \label{fig:symmetric}	Simulation for $\mu$ uniform on $[-1,1]$ and $10^5$ particles. }
\end{center}
 \end{figure}

\section{Definitions and basic properties}\label{sec:definitions}

Let $(x_k)_{k\in\Z}$ be a Poisson point process with intensity 1 on $\R$ under Palm measure, i.e.\ conditioned on containing 0: 
\[\cdots<x_{-1}<x_0=0< x_1<\ldots,\] which is equivalent to $(x_k)_{k\ge1}$ and $(-x_{-k})_{k\ge1}$ being independent Poisson point processes on $\R_+$. The points $x_k$ are meant as starting locations of particles.

Let $\mu$ be a distribution on $\R$ with bounded support. Let $(v_k)_{k\in\Z}$ be i.i.d.\ random variables with law $\mu$, independent of $(x_k)_{k\in\Z}$, standing for the velocities of the particles. 

Let us denote by $\P_\mu$ the law of $(x_k)_k,(v_k)_k$. We shall write $\P_p$ in the particular case when $\mu=\frac{1-p}2\delta_{-1}+p\delta_0+\frac{1-p}2\delta_1$, for $p\in[0,1]$.

The process is defined as follows. From each location $x_k$, a particle is released at time~0, with speed $v_k$ (hence going to the right if $v_k>0$, to the left otherwise). Particles move at constant speed until they collide with another particle, at which point both annihilate and therefore don't take part in later collisions. Due to interdistances between starting locations having an atomless distribution and being independent of speeds, almost surely no triple collision happens. The model is obviously well-defined if only finitely many particles are considered, for there is a chronologically first collision to be dealt with. In the case when particles initially lie on $\R_+$, well-definedness is still ensured if speeds are lower bounded: for $i,j\in\Z_+$, given $x_i,x_j,v_i,v_j$, the collision of $i$ and $j$ can indeed be checked by only considering the system formed by the finitely many particles on the left of the rightmost location where a triple collision with $i$ and $j$ could be triggered. Using symmetry, collisions are then well-defined on $\R$ since speeds are assumed bounded.

\newcommand{\hit}[1]{\overset{}{\underset{#1}{\longmapsto}}}
\newcommand{\hits}[1]{\overset{}{\underset{#1}{\longleftrightarrow}}}

For $k,l\in\Z$ belonging to an interval $I\subset\R$, we denote by $k\hits I l$ the property that the particles released from $x_k$ and $x_l$ mutually annihilate in the system restricted to particles departing from $\{x_i\st i\in I\}$. We denote by $k\hits I\infty$ the property that the particle from $x_k$ survives, i.e.\ is not annihilated by another particle, in this restricted system.

Note that, for given starting locations, collision events $\{i\hits I j\}$ only depend on ratios of differences of speeds, so that adding the same amount to every speed (i.e.\ shifting $\mu$), or multiplying them by a constant, does not change the probability of any event relating to collisions. We shall refer to this simple fact as the \emph{linear speed-change invariance property}. In particular, several ``bullet'' models (see Introduction) are in correspondence with a given $\mu$, through $w_k\mapsto v_k=1/(\alpha+\beta w_k)$ for any $\alpha,\beta$ such that $\alpha+\beta w_k> 0$ a.s.. 

Also, a symmetry property obviously holds with respect to the reflection $x\mapsto -x$, namely that the law of the model with particles starting from $I\subset\R$ is the same as that of the reflection of the model with particles starting from $-I$ with speeds sampled from $\mu(-{\rm d}v)$.

\section{Extinction for symmetric distributions}\label{sec:symmetric}

\begin{proposition}\label{pro:symmetry}
Assume $\mu$ is symmetric, $\mu\ne\delta_0$. 
\begin{enumerate}
[a)]
	\item For any $v>0$, $\P_\mu(0\hits{\R_+}\infty\s v_0=v)=0$, where the conditioning here is understood as setting $v_0=v$ and letting $(v_k)_{k\ne0}$ be i.i.d.\ with law $\mu$.
	\item If $\P_\mu(0\hits\R\infty)=0$, then almost surely infinitely many particles cross~0 in the system restricted to particles starting from $\R_+$. 
\end{enumerate}
\end{proposition}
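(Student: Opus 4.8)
The plan is to prove the two statements in sequence, using the reflection symmetry $x\mapsto-x$ together with the half-line well-definedness established in Section~\ref{sec:definitions}.

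\textbf{Part a).} Fix $v>0$ and condition on $v_0=v$. The strategy is a symmetry-and-coupling argument: I would compare the half-line system with the auxiliary event, under the \emph{full-line} measure, that particle $0$ (now with a freshly resampled speed from $\mu$, independent of everything) survives against the particles starting from $\R_+$ \emph{and} that, in addition, it is never even approached from the left. More concretely, let me introduce the point process on $\R_-$ obtained by reflecting the $\R_+$ process through $0$, together with independent speeds from $\mu$; by the reflection symmetry property this reflected system has the same law as the original half-line system with speeds from $\mu(-\d v)=\mu(\d v)$. Now run the combined system on all of $\R$. On the event $\{0\hits{\R_+}\infty\}$ (with $v_0=v>0$), particle $0$ moves to the right forever, so it survives in the full system iff no particle coming from the left of $0$ ever reaches it; but since $v>0$, only particles from the left with speed strictly larger than $v$ could catch it, and such a particle, to reach $x_0=0$ by time $t$, must have started within distance $\operatorname{const}\cdot t$ of $0$ --- so this is a genuine event of the full-line system. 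The key inequality to establish is
\[
	\P_\mu(0\hits{\R_+}\infty\s v_0=v)\ \le\ \P_\mu(0\hits\R\infty\s v_0=v)\cdot(\text{something})^{-1},
\]
or rather, directly, that the conditional survival probability on the half-line, when positive, would force a positive full-line survival probability for $0$ \emph{with a positive speed}; and then I invoke: on the full line with symmetric $\mu$, a particle with positive speed cannot survive. The latter is itself a symmetry fact --- if particle $0$ with speed $v>0$ survives going right, reflect the picture: the reflected configuration has the same law, particle $0$ now has speed $-v<0$ and survives going left; superposing the two scenarios (or using a mass-transport / exchangeability argument between "the rightmost surviving right-mover" and "the leftmost surviving left-mover") yields a contradiction with the fact that right-movers and left-movers must pair up. I expect the cleanest route is: condition on $v_0=v>0$ and on the particle surviving on $\R_+$; translate this to a statement about the full line by adding the independent reflected left-system; observe that adding particles can only help annihilation, so $0$ survives on $\R_+$ but, in the full system, has probability bounded below (by a quantity depending only on $v$ and $\mu$, via a finite-range domination of the left-particles that could catch it) of still surviving --- contradicting $\P_\mu(0\hits\R\infty\s v_0=v)=0$, which follows from the hypothesis $\P_\mu(0\hits\R\infty)=0$ combined with the linear speed-change invariance and a disintegration over $v_0$. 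Actually a.s.\ annihilation on the full line need not be assumed in a); there the point is purely the symmetry argument sketched above showing a positive-speed particle on $\R_+$ behaves, after reflection, like a negative-speed particle, and these cannot both survive.

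\textbf{Part b).} Assume $\P_\mu(0\hits\R\infty)=0$. I want to show that a.s.\ infinitely many particles cross $0$ in the half-line system. Suppose not: then with positive probability only finitely many particles from $\R_+$ ever reach the origin, so there is a (random) last one, after which the origin is never visited again; equivalently, from some time on, the half-line system decouples at $0$ and the particles to the right of whatever is left near $0$ evolve as an autonomous half-line system. The idea is to use this to manufacture a surviving particle on the full line, contradicting $\P_\mu(0\hits\R\infty)=0$. Concretely: if only finitely many particles cross $0$ from the right, consider the reflected independent copy on $\R_-$; in the combined full-line system, particles from $\R_+$ that would have crossed $0$ now instead meet particles from $\R_-$, but --- by symmetry --- particles from $\R_-$ that would have crossed $0$ now instead meet particles from $\R_+$, and a counting/parity argument at the origin shows that if only finitely many cross from each side then the "flux imbalance" leaves some particle near $0$ unmatched, hence surviving forever in the full-line system. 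This would give $\P_\mu(0\hits\R\infty)>0$, a contradiction. Alternatively, and perhaps more robustly, I would argue by a direct mass-transport / stationarity argument on $\R$: if with positive probability $0$ is crossed only finitely often in the $\R_+$-system, then by the symmetry between $\R_+$ and $\R_-$ and independence, in the full-line system the origin is crossed finitely often from each side with positive probability, and on that event the left part and right part of the full-line system are, after a finite time, independent autonomous half-line systems --- so some particle among the finitely many that would have been "crossers" must be a survivor.

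\textbf{Main obstacle.} The delicate point is the parity/flux bookkeeping at the origin: I must argue cleanly that "finitely many crossings from the right in the half-line system" forces, in the symmetrized full-line system, an unmatched surviving particle. The subtlety is that crossing the origin is not the same as annihilating, and one has to track whether the particle that \emph{last} crosses $0$ is a right-mover or left-mover and pair it with its counterpart from the reflected system; making this rigorous requires a careful definition of "the particle occupying a neighborhood of $0$ at large times" and an exchangeability/mass-transport argument to rule out that both the right-system and its independent reflected copy conspire to have all near-origin particles annihilate. I expect to handle this by conditioning on the finite collection of particles that ever come within a fixed distance of $0$ (a.s.\ finite on the relevant event, by the crossing hypothesis and boundedness of speeds) and doing the matching argument within that finite system, where well-definedness and the absence of triple collisions make the pairing unambiguous.
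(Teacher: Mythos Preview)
Your proposal is missing the central tool the paper uses for both parts: a lemma (proved by a parity/ergodicity argument borrowed from Aizenman--Nachtergaele) stating that if $\P_\mu(0\hits\R\infty)=0$ then almost surely, for every $x$, infinitely many pairs $(i,j)$ with $i<x<j$ satisfy $i\hits\R j$. Both a) and b) are reduced to contradicting this lemma.

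\textbf{On Part a).} Your coupling sketch, to the extent it can be made precise, would show at best that $\P_\mu(0\hits\R\infty\mid v_0=w)>0$ for $w$ in a set of positive $\mu$-measure --- essentially the \emph{weaker} proposition the paper proves afterwards (where $v_0$ is $\mu$-distributed). It does not give the claim for an \emph{arbitrary} fixed $v>0$: if $v$ is below the support of $\mu_{|(0,\infty)}$, your monotonicity/reflection trick produces no $w\in(0,v]$ with $\mu((0,w])>0$, so Lemma~\ref{lem:only0} yields no contradiction. Your alternative suggestion of a ``finite-range domination of the left-particles that could catch it'' is not correct as stated: particles from $\R_-$ with speed exceeding $v$ occur with positive density and there is no finite-range event guaranteeing none of them reaches the trajectory of particle~$0$. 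The paper instead first disposes of the case $\P_\mu(0\hits\R\infty)>0$ (surviving particles have speed $0$ by Lemma~\ref{lem:only0}, hence block any $v>0$ particle), and in the remaining case builds, by ergodicity, a positive density of Poisson points at which \emph{virtual} $\pm v$ particles would both survive their respective half-lines; two such points $j<0<k$ form a ``cone'' that forces every pair straddling $0$ to lie in $[j,k]$, so $N_0<\infty$, contradicting the lemma above.

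\textbf{On Part b).} You correctly flag the bookkeeping at the origin as the obstacle, but your flux/mass-transport idea is not the paper's route and you do not indicate how to close it. The paper's argument is more concrete: if $N_+<\infty$ with positive probability, fix $k$ and rational time-windows so that with positive probability exactly $k$ particles cross $0$ at times in those windows; by symmetry the same holds on $\R_-$. Then, by independence, plant $2k$ particles near the origin (at $x_{-k+1},\ldots,x_k$) with speeds chosen so that they intercept the $k$ crossers arriving from each side at the prescribed times. On this positive-probability event $N_0=0$, again contradicting the lemma. Your proposal never isolates a finite deterministic number of crossers, which is what makes the interception construction work.
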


This Proposition will follow from the two lemmas below. 

\begin{lemma}\label{lem:hulls}
For $x\in\R$, let $N_x=\#\{(i,j)\in\Z^2\s i<x<j,\ i\hits\R j\}$ denote the number of couples of particles that meet over $x$. 
If $\P_\mu(0\hits\R\infty)=0$, then a.s., for all $x$, $N_x=\infty$.
\end{lemma}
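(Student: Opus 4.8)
The plan is to argue by contradiction: suppose $\P_\mu(0\hits\R\infty)=0$ but $\P_\mu(N_x<\infty)>0$ for some $x$, which by translation invariance of the Poisson process and i.i.d.\ speeds we may take to be $x=0$. The key observation is that $N_0$ counts the ``nested arcs'' of mutually-annihilating pairs straddling the origin. Since no particle survives (by hypothesis $\P_\mu(0\hits\R\infty)=0$, and more importantly its consequence that a.s.\ \emph{every} particle is annihilated — this follows because the event that the particle from $x_k$ survives has, by translation invariance, the same probability as $\{0\hits\R\infty\}$, hence is null, and a countable union of null events is null), each of the particles to the left of $0$ and each to the right of $0$ is eventually annihilated by some partner. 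I would then examine the partner of $x_1$ (the first particle to the right of $0$): either it is some $x_j$ with $j\le 0$, in which case the pair $(j,1)$ straddles $0$ and contributes to $N_0$, or it is some $x_j$ with $j\ge 2$. In the latter case all of $x_2,\ldots,x_{j-1}$ must annihilate among themselves (no trajectory can cross the $x_1$–$x_j$ annihilation arc), so we may ``peel off'' the pair $(1,j)$ together with the block between them and recurse on the configuration $\ldots,x_{-1},x_0,x_{j+1},\ldots$; the analogous peeling applies on the left. If this peeling never produces a straddling pair, then we would partition $\{x_k:k\ge1\}$ into finite blocks each of which annihilates within itself, and likewise $\{x_k:k\le0\}$ — but then the half-line system restricted to $\R_+$ would have all particles annihilate within finitely many blocks, contradicting… hmm, actually the cleaner contradiction is the following.

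A cleaner route: define $R_0$ to be the number of particles $x_k$ with $k\ge 1$ whose annihilation partner has index $\le 0$; this is exactly the number of right-particles ``reaching across'' $0$, and symmetrically $L_0$ on the left, and one checks $N_0=R_0=L_0$ by the non-crossing (planarity) structure of annihilation arcs — the arcs straddling $0$ are linearly ordered by nesting and pair up left indices with right indices bijectively. So it suffices to show $R_0=\infty$ a.s. Suppose $R_0<\infty$ with positive probability. On that event there is a last straddling arc, i.e.\ an index $m\ge1$ such that $x_1,\ldots,x_m$ together with some $x_{-j},\ldots,x_0$ form a system closed under annihilation (every particle in it annihilates a partner in it), while $x_{m+1},x_{m+2},\ldots$ never reach across $0$. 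But the process restricted to $\{x_k:k\ge m+1\}$ is, after translation, distributed exactly as the process on $\R_+$; and the fact that none of its particles ever reaches across the point $x_m$ (to its left) means in particular that in the \emph{full-line} system none of them survives by escaping to the left past $x_m$ — yet by hypothesis none survives to the right either. The contradiction I want is with Lemma~\ref{lem:hulls}'s hypothesis being vacuous otherwise, so instead I should directly derive a surviving particle: if the blocks to the right of $0$ are all finite and self-annihilating, consider the leftmost particle $x_1$; it annihilates some $x_j$ with $1<j\le m$, and the segment $[x_1,x_j]$ is crossed only by particles from inside the block. Iterating inward, the innermost such block is a single adjacent pair $(x_i,x_{i+1})$ with $v_i>0>v_{i+1}$ colliding before any outside interference — fine, that is consistent. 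So finiteness of $R_0$ is \emph{not} immediately contradictory; the real input must be probabilistic.

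Here is where the main obstacle lies, and how I would resolve it. The point is that $\{R_0<\infty\}$ forces an unlikely \emph{global coordination}: on this event, reading the right half-line $x_1<x_2<\cdots$, the annihilation pattern must, from some point on, consist entirely of finite self-contained blocks — equivalently, the walk $k\mapsto (\#\text{right-particles among }x_1,\ldots,x_k)-(\#\text{their partners among }x_1,\ldots,x_k\text{ that lie in }\{1,\ldots,k\})$ returns to $0$ infinitely often AND is eventually $\ge 0$ in a way that closes up. I would encode ``who reaches across $0$'' by a $\pm1$ walk: step $+1$ when scanning a particle whose partner is further right or is still pending, step $-1$ when closing a pending one; then $R_0$ is the number of steps that remain ``open forever,'' i.e.\ $\liminf$ of the walk, and $R_0<\infty$ says the walk is bounded below along a subsequence by a finite constant. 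Using the linear speed-change and reflection invariances (Section~\ref{sec:definitions}) together with the renewal structure of the Poisson process, the law of this walk is translation-invariant and, crucially, its increments have a stationary ergodic structure; a bounded-below stationary walk with symmetric-enough increments cannot stay bounded below forever unless it is trivial, forcing $R_0=\infty$. The genuinely delicate part — which I expect to be the crux of the actual proof and which I would spend the most care on — is making this ``recurrence forces $R_0=\infty$'' step rigorous \emph{without} assuming more structure than Lemma~\ref{lem:hulls} grants: one likely needs to invoke the ergodic theorem or a mass-transport / point-stationarity argument on the Palm version of the Poisson process to say that if $N_0<\infty$ with positive probability then $\E[N_0]<\infty$, then transport mass along annihilation arcs to reach a contradiction with the fact (from $\P_\mu(0\hits\R\infty)=0$) that a.s.\ every particle is matched. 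I would therefore organize the final write-up around a mass-transport principle: send unit mass from each particle to its annihilation partner; stationarity gives equal in- and out-flow; if only finitely many arcs cross each point, a counting/averaging argument over a large window $[-L,L]$ shows the number of arcs crossing a typical point is $\Theta(L)$, contradicting finiteness. That averaging step — controlling boundary effects of arcs that exit the window — is the main obstacle.
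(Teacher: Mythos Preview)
Your proposal does not contain a complete argument. You cycle through three approaches (peeling, a random-walk encoding, and mass transport), correctly abandon the first, but leave the other two as sketches whose decisive steps are either unjustified or incorrect as stated. For the walk encoding, the claim that the increments are ``symmetric-enough'' to force recurrence is not supported by the hypothesis: the lemma does not assume $\mu$ symmetric, and even when it is, the matching structure on the right half-line is not a walk with centered increments in any evident sense; the sentence ``a bounded-below stationary walk \ldots\ cannot stay bounded below forever unless it is trivial'' is exactly the statement you need to prove, not an available tool. For the mass transport, sending one unit from each particle to its annihilating partner yields mass-in $=$ mass-out $=1$ at every vertex, which is perfectly consistent with $N_0<\infty$ and yields no contradiction; your last sentence, that ``the number of arcs crossing a typical point is $\Theta(L)$'', conflates two different quantities and is not what any averaging over $[-L,L]$ would produce.

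The paper's proof is short and rests on a parity trick you did not consider. First, since $\{N_x=\infty\}$ does not depend on $x$ and is translation invariant, ergodicity reduces to assuming $N_x<\infty$ for all $x$ almost surely. The trajectories of annihilating pairs are disjoint curves in the upper half-plane, leaving a single unbounded complementary region. Now $N_{x_1/2}$ counts the curves separating $x_1/2$ from that region, and shifting the configuration by one particle moves the reference point across exactly one such curve, changing $N_{x_1/2}$ by $\pm1$. Hence the event $\{N_{x_1/2}\text{ is even}\}$ is invariant under the shift by two particles, so by ergodicity it has probability $0$ or $1$; but invariance under the single shift together with the parity flip forces its probability to equal $\P(N_{x_1/2}\text{ odd})$, hence $1/2$. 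This contradiction gives the lemma.
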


\begin{proof}[Proof of Lemma~\ref{lem:hulls}]
We use a parity argument inspired by~\cite[page 43]{aizenman-nachtergaele}. First note that the event $\{N_x=\infty\}$ does not depend on $x$, is translation invariant, and therefore has probability 0 or 1 due to ergodicity. Assume that $\P_\mu(0\hits\R\infty)=0$ and, by contradiction, that $N_{x_1/2}<\infty$ almost surely. Let us view the trajectories of particles as space-time curves (actually, line segments) in the upper half-plane $\H$ (cf.\ Figures~\ref{fig:p025} and~\ref{fig:symmetric}). Since $\P_\mu(0\hits\R\infty)=0$, almost surely for every $i\in\Z$, there is $j\in\Z$ such that $i\hits\R j$, and the trajectories of $i$ and $j$ split $\H$ into one finite and one infinite components. Note also that all these pairs of trajectories are disjoint of each other. Since furthermore $N_{x_1/2}<\infty$ a.s., the union of these curves delimitates exactly one infinite component in $\H$. This implies that the event $\{N_{x_1/2}\text{ is even}\}$ is invariant by even shifts, i.e.\ by replacing $(x_k,v_k)_{k\in\Z}$ by $(x_{k+2}-x_2,v_{k+2})_{k\in\Z}$. $N_{x_1/2}$ is indeed the number of curves that separate $\frac{x_1}2$ from the infinite component, and a shift amounts to crossing a boundary and thus increases or decreases by 1 the number of curves separating from the infinite component, hence changing parity. By ergodicity, this event therefore has to have probability~0 or~1. However, its probability is $1/2$ due to shift invariance and alternance of parities. The conclusion follows from this contradiction.
\end{proof}

\begin{lemma}\label{lem:only0}
Assume $\mu$ is symmetric, $\mu\ne\delta_0$. Then $\P_\mu(0\hits{\R}\infty\s v_0\ne 0)=0$. 
\end{lemma}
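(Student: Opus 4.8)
The plan is to derive a contradiction from the assumption that particle~$0$ survives with positive probability while moving, using only the symmetry of~$\mu$ together with the elementary fact that two surviving particles can never cross in space-time — which forces the speeds of surviving particles to be nondecreasing along their starting positions.

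First I would suppose, toward a contradiction, that $\P_\mu(0\hits\R\infty,\ v_0\ne0)>0$. Since $\mu$ is symmetric, the reflection $x\mapsto-x$ preserves the law of the full model (by the symmetry property recalled in Section~\ref{sec:definitions}, since $\mu(-\d v)=\mu(\d v)$) and maps the event $\{0\hits\R\infty,\ v_0>0\}$ onto $\{0\hits\R\infty,\ v_0<0\}$. These two events are therefore equiprobable, and since their union has positive probability, both do; set $c\defeq\P_\mu(0\hits\R\infty,\ v_0>0)=\P_\mu(0\hits\R\infty,\ v_0<0)>0$. The Palm process being stationary under the index shift, the same number~$c$ is, for every~$k\in\Z$, the probability that particle~$k$ survives with positive speed, and likewise with negative speed.

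Next I would record the deterministic fact: \emph{if $i<j$ and particles $i$ and $j$ both survive in the system on~$\R$, then $v_i\le v_j$}. Indeed, a surviving particle occupies the straight space-time ray $t\mapsto(x_k+v_kt,t)$, $t\ge0$; were $x_i<x_j$ but $v_i>v_j$, these two rays would meet at the positive time $(x_j-x_i)/(v_i-v_j)$, putting $i$ and $j$ at the same place at the same time while both still alive, i.e.\ colliding — contradicting that either survives. Hence, among surviving particles, each one with negative speed starts strictly to the left of each one with positive speed. I would then invoke ergodicity of the index shift (a Bernoulli shift on the i.i.d.\ sequence of spacings and speeds): by the ergodic theorem, $\P_\mu$-a.s.\ the sets $\{k:k\hits\R\infty,\ v_k<0\}$ and $\{k:k\hits\R\infty,\ v_k>0\}$ both have density~$c>0$, hence are infinite. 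So, a.s., there is a surviving right-mover, say started at position~$b$, while surviving left-movers occur at arbitrarily large starting positions (recall $x_k\to+\infty$), in particular at some position larger than~$b$; this pair contradicts the monotonicity above. The contradiction gives $\P_\mu(0\hits\R\infty,\ v_0\ne0)=0$, as claimed.

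The genuinely delicate point is not an isolated hard step but getting the soft probabilistic inputs right: one must set up the correct ergodic framework — the index shift acting on the two-sided Palm configuration of spacings and speeds, under which ``particle~$k$ survives with negative speed'' is a measurable event of constant probability — in order to upgrade ``positive probability'' to ``positive density, hence spatially unbounded'', and one should check that representing a surviving particle by a straight ray is legitimate, which is exactly what the well-posedness discussion of Section~\ref{sec:definitions} secures. Symmetry of~$\mu$ is essential here: without it one could only conclude that surviving particles of a single fixed sign are spatially unbounded, and no contradiction with monotonicity would arise.
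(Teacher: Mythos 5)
Your argument is correct and is essentially the paper's own proof, which also deduces from ergodicity a positive density of surviving positive-speed particles, obtains the same for negative speeds by symmetry, and concludes from the impossibility of their mutual survival (a surviving right-mover to the left of a surviving left-mover must collide with it). You have merely spelled out the symmetry, stationarity, and non-crossing steps that the paper leaves implicit.
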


\begin{proof}[Proof of Lemma~\ref{lem:only0}]
If $\P_\mu(v_0>0,\,0\hits\R\infty)>0$ then, by ergodicity, almost surely a positive density of particles survive and have a positive speed, and by symmetry the same holds for negative speeds, contradicting their mutual survival. 
\end{proof}

\begin{proof}[Proof of Proposition~\ref{pro:symmetry}]
Let us first prove a). 
If $\P_\mu(0\hits\R\infty)>0$, then by Lemma~\ref{lem:only0} surviving particles have 0 speed, and by ergodicity a positive density of them survive, which prevents any positive speed particle from surviving. We can thus now suppose that $\P_\mu(0\hits\R\infty)=0$.
Assume by contradiction that there is $v>0$ such that $\P_\mu(0\hits{\R_+}\infty\s v_0=v)>0$. Since this probability is non-increasing in $v$, we may furthermore choose $v$ small enough so that $\mu((v,+\infty))>0$. 
We have by symmetry that $\P_\mu(0\hits{\R_-}-\infty\s v_0=-v)>0$ and thus, by independence, with positive probability we may have (cf.~Figure~\ref{fig:proof}) that, at the same time, $v_0>v$, $v_1<0$, a $(-v)$-speed particle at $0$ would survive $\R_-$ and a $v$-speed particle at $x_1$ would survive $\R_+$. On this event, both an additional $v$-speed particle and an additional $(-v)$-speed particle launched at $0$ (at time $0^+$) would survive, counting with the particle already at 0. Since this has positive probability, by ergodicity we conclude that almost surely there is a positive density of Poisson points where this happens. This implies that $N_0<\infty$, for if $j,k$ are indices of such points, with $j<0<k$, then the surviving trajectories of a $(-v)$ particle at $k$ and of a $v$-particle at $j$ bound the couples straddling 0 to lie between $j$ and $k$ and thus be finitely few (cf.\ Figure~\ref{fig:proof}). Lemma~\ref{lem:hulls} yields a contradiction. 

\begin{figure}
 \begin{center}
  \includegraphics[width=6cm]{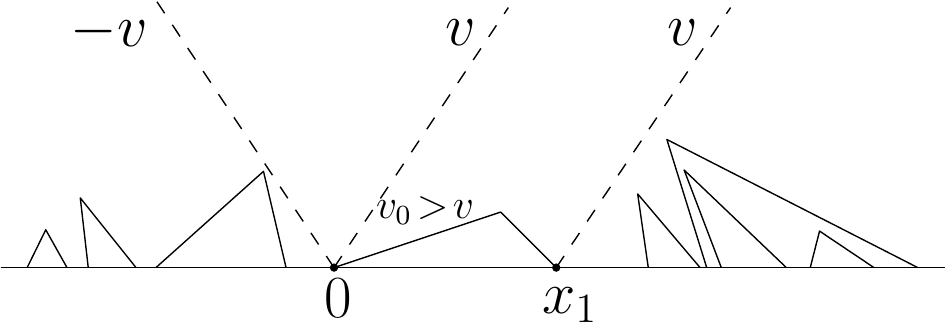}
  \includegraphics[width=6cm]{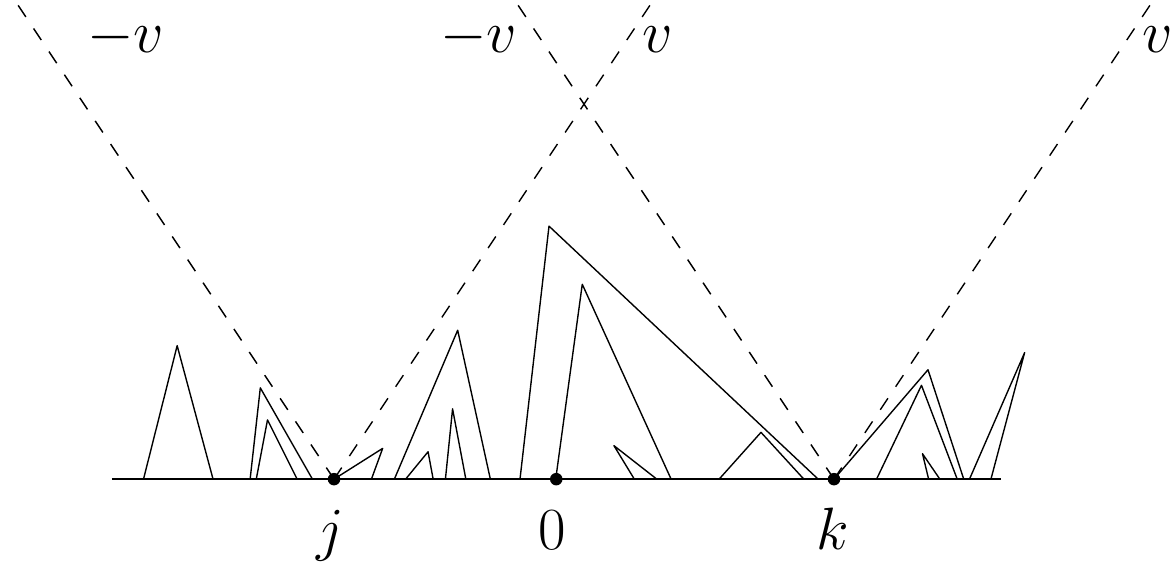}\\
  \includegraphics[width=6cm]{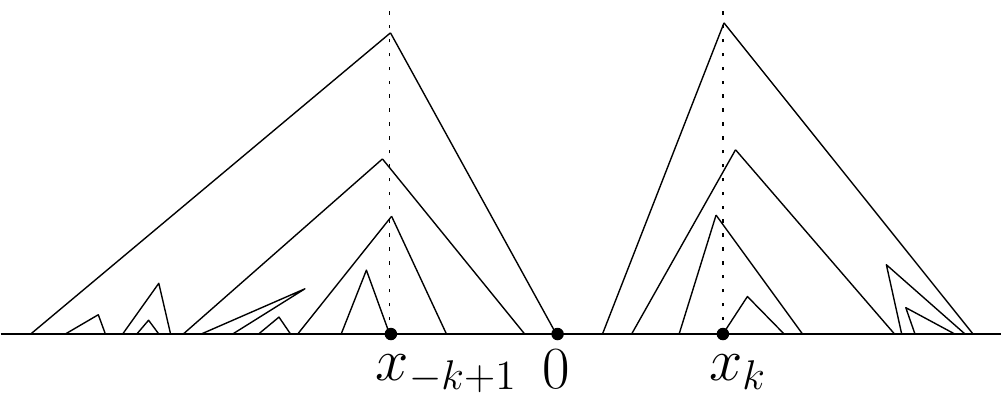}
 \caption{ \label{fig:proof}Illustrations of the proof of Proposition~\ref{pro:symmetry} a) (top) and b) (bottom). }
\end{center}
 \end{figure}

Let us now consider b).  Suppose $\P_\mu(0\hits\R\infty)=0$. Let us denote by $N_+$ (resp.\ $N_-$) the number of particles that ever cross $0$ in the system restricted to particles on $\R_+$ (resp.\ $\R_-$). Assume by contradiction that $\P_\mu(N_+<\infty)>0$. One may notice that $N_+=N_-<\infty$ doesn't automatically imply that $N_0<\infty$. However, there is $k\in\N$ and rational locations $u_1<v_1<u_2<v_2<\cdots<u_k<v_k$ such that, with positive probability, $N_+=k$ and these $k$ particles cross 0 at times $\tau_1\in[u_1,v_1]$,..., $\tau_k\in[u_k,v_k]$. By symmetry, the same event relative to $\R_-$ has the same positive probability. We can then produce a positive probability event such that  (cf.\ Figure~\ref{fig:proof}) the above happens for the process on the right of $x_k$ and on the left of $x_{-k+1}$ respectively, and such that the particles from 1 to $k$ (resp.\ from $0$ to $-k+1$) meet the $k$ particles that arrive from the right (resp.\ from the left). However we then have $N_0=0$ on this event, contradicting Lemma~\ref{lem:hulls}. 
\end{proof}

Let us note that the following weaker result, namely the extinction of a positive-speed particle at 0 when the law of its speed is $\mu$, is easier:

\begin{proposition}\label{pro:symmetry}
Assume $\mu$ is symmetric $\mu\ne\delta_0$. Then $\P_\mu(0\hits{\R_+}\infty\s v_0>0)=0$. 
\end{proposition}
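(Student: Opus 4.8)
The plan is to imitate the short argument of Lemma~\ref{lem:only0}, replacing the translation invariance of $\R$ by a stationarity‑plus‑ergodicity argument adapted to the half‑line. It suffices to prove $\P_\mu(v_0>0,\ 0\hits{\R_+}\infty)=0$, and I would argue by contradiction, assuming this probability equals some $q>0$. The one structural ingredient needed is a \emph{screening property}: by non‑crossing of trajectories, if the leftmost particle of a half‑line system survives, then no other particle ever reaches its trajectory, so it never interacts with the others and the remaining particles evolve exactly as in the system obtained by deleting it. In particular, with $(x_k,v_k)_{k\in\Z}$ the two‑sided Palm configuration of Section~\ref{sec:definitions}, the event ``$v_k>0$ and the particle from $x_k$ survives within $\{x_j:j\ge k\}$'' depends only on $(x_j,v_j)_{j\ge k}$; call such a $k$ \emph{right‑good}, and symmetrically call $k$ \emph{left‑good} if $v_k<0$ and the particle from $x_k$ survives within $\{x_j:j\le k\}$.

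By stationarity of the Poisson process and of the i.i.d.\ speeds, $\P_\mu(k\text{ right‑good})=q$ for every $k$, and because $\mu$ is symmetric, $\P_\mu(k\text{ left‑good})=q$ as well. Ergodicity of the point shift then gives that, almost surely, the right‑good indices form a set of density $q>0$ and so do the left‑good ones; in particular one may fix a right‑good $j$ and a left‑good $\ell$ with $x_j<x_\ell$. In the full system, by the screening property, the right‑good particle follows the ray $t\mapsto x_j+v_jt$ until — if ever — it is hit from the left, i.e.\ by some $x_i$ with $i<j$; symmetrically the left‑good particle follows $t\mapsto x_\ell+v_\ell t$ until it is hit from the right. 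Since $x_j<x_\ell$ and $v_j>0>v_\ell$, these two rays cross at a positive time.

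The decisive step, which I expect to be the main obstacle, is to turn this into a contradiction. The goal is to show that in the full system both particles actually persist along their rays up to the crossing time: they would then annihilate each other, and the right‑good particle would have been annihilated by a particle on its right, contradicting its right‑goodness. This persistence is not automatic, since survival is not monotone in the set of particles; the idea is to use the screening property together with the matched‑parenthesis structure of annihilations — between two particles that annihilate each other, all intervening particles annihilate pairwise among themselves — to show that the particles initially lying between $x_j$ and $x_\ell$ stay confined to the space‑time lens bounded by the two rays and must pair off before the rays meet, forcing the two extremal particles to collide. It should streamline matters to first dispose, exactly as in the proof of part~a), of the case $\P_\mu(0\hits{\R}\infty)>0$ (using Lemma~\ref{lem:only0}), so that one may also assume no particle survives on the full line; then in the full system the right‑good particle is necessarily hit from the left and the left‑good one from the right, which is precisely the configuration the lens argument needs.
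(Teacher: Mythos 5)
Your reduction to $\P_\mu(v_0>0,\ 0\hits{\R_+}\infty)=0$, the definition of right-/left-good indices, the screening property (in the full system a right-good particle is either immortal or first hit from its left), and the positive density of good indices are all sound. But the proof has a fatal gap at exactly the step you flag as decisive: the existence of a right-good $j$ and a left-good $\ell$ with $x_j<x_\ell$ is \emph{not} contradictory, so no completion of the ``lens'' argument can work. Concretely, take four particles $\kappa,j,\ell,m$ at positions $-1,0,10,11$ with speeds $6,1,-1,-6$. In the system $\{j,\ell,m\}$ the pair $(\ell,m)$ annihilates at time $0.2$ and $j$ survives, so $j$ is right-good; in the system $\{\kappa,j,\ell\}$ the pair $(\kappa,j)$ annihilates at time $0.2$ and $\ell$ survives, so $\ell$ is left-good; yet in the full system all four particles are dead by time $0.2$, long before the rays of $j$ and $\ell$ would cross at time $5$. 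The killers $\kappa$ and $m$ come from \emph{outside} the lens bounded by the two rays, so the matched-parenthesis structure of the particles between $x_j$ and $x_\ell$ (here there are none) constrains nothing: neither extremal particle persists to the crossing time. The obstruction is not merely that survival fails to be monotone; the intermediate statement you are trying to prove is false, and first reducing to $\P_\mu(0\hits\R\infty)=0$ does not help, since the example is compatible with every particle dying.

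The paper's proof goes a different way and never needs the density of good points. By a quantile argument one picks $v>0$ with $\mu((0,v])>0$ and $\P_\mu(0\hits{\R_+}\infty\s v_0=v)>0$, using the monotonicity that lowering the (positive) speed of the leftmost particle preserves its survival on $\R_+$; then the factorization $\{0\hits{\R}\infty\}=\{0\hits{\R_+}\infty\}\cap\{0\hits{\R_-}\infty\}$ into two events that are independent given $v_0$ shows that a particle of speed $w\in(0,v]$ survives on the whole line with positive probability, contradicting Lemma~\ref{lem:only0}. Your density-of-good-points idea does appear in the paper, but for the stronger part a) of the first proposition, and in the opposite geometric configuration: a location near which both an added $v$-particle would survive to the right and an added $(-v)$-particle would survive to the left. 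There the two rays \emph{diverge}, both added particles survive by mutual screening, and the contradiction comes not from a collision but from the parity Lemma~\ref{lem:hulls}. If you want to salvage your route, you need both that configuration and that lemma.
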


\begin{proof}[Proof of the proposition]
By contradiction, assume $\P_\mu(0\hits{\R_+}\infty\s v_0>0)>0$. 
Then we may find $v>0$ such that
\begin{equation}\label{eqn:defv}
\P_\mu(v_0\in(0,v])>0\qquad\text{and}\qquad \P_\mu(0\hits{\R_+}\infty\s v_0=v)>0, 
\end{equation}
where the conditioning here is merely understood as letting $v_0=v$ and letting $(v_k)_{k\ne0}$ be i.i.d.\ with law $\mu$. Indeed, one may for instance take $v$ to be the median or any quantile of $\P_\mu(v_0\in\cdot\s 0\hits{\R_+}\infty,\, v_0>0)$, for then $\P_\mu(0<v_0\le v)>0$ and $\P_\mu(v_0\ge v,\ 0\hits{\R_+}\infty)>0$, and the last probability is smaller than $\P_\mu(0\hits{\R_+}\infty\s v_0=v)\mu((v,\infty))$ because replacing the speed $v_0$ at $x_0$ by $v\le v_0$ preserves survival on $\R_+$. 

Then~\eqref{eqn:defv}, with its symmetric $\P_\mu(0\hits{\R_-}\infty\s v_0=-v)>0$, yields, for any $w\in[-v,v]$, 
\begin{align*}
\P_\mu(0\hits{\R}\infty\s v_0=w)
	& =\P_\mu(0\hits{\R_+}\infty\s v_0=w)\P_\mu(0\hits{\R_-}\infty\s v_0=w)\\
	& \ge \P_\mu(0\hits{\R_+}\infty\s v_0=v)\P_\mu(0\hits{\R_-}\infty\s v_0=-v)>0.
\end{align*}
Hence in particular, since $\mu((0,v])>0$ by\eqref{eqn:defv},
\[\P_\mu(0\hits{\R}\infty, v_0\in(0,v])>0,\]
which contradicts Lemma~\ref{lem:only0}. This proves the proposition.
\end{proof}

We may remark that Lemma~\ref{lem:only0} ensures that the assumption of Proposition~\ref{pro:symmetry} b) is satisfied in particular if $\mu(\{0\})=0$. Also, the arguments of the proof of this lemma show that surviving particles on $\R$ have the same speed, which is deterministic and therefore has to be an atom of $\mu$: 

\begin{lemma}
If $\P_\mu(0\hits\R\infty)>0$, then there is $v\in\R$ such that $\P_\mu(v_0=v\s 0\hits\R\infty)=1$. 
\end{lemma}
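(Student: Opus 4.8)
The plan is to reuse the ergodicity-plus-symmetry mechanism already developed in the proof of Lemma~\ref{lem:only0}, now applied speed-by-speed rather than just to the sign of the speed. Suppose $\P_\mu(0\hits\R\infty)>0$. For each Borel set $B\subset\R$, the event $\{v_0\in B,\ 0\hits\R\infty\}$ is measurable with respect to the configuration, and by ergodicity of the underlying stationary marked point process (with respect to the shift sending $(x_k,v_k)_{k\in\Z}$ to $(x_{k+1}-x_1,v_{k+1})_{k\in\Z}$) the spatial density of surviving particles whose speed lies in $B$ is an almost sure constant, equal to $\P_\mu(v_0\in B,\ 0\hits\R\infty)$ up to the Palm/stationary normalization.

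First I would argue that two surviving particles on $\R$ cannot have distinct speeds. Indeed, if $v<v'$ were two real numbers with $\P_\mu(v_0=v\mid 0\hits\R\infty)>0$ and $\P_\mu(v_0=v'\mid 0\hits\R\infty)>0$ (or more generally if the conditional law of $v_0$ given survival charged two disjoint intervals $B,B'$ with $\sup B<\inf B'$), then by the density statement above, almost surely a positive density of surviving particles have speed in $B$ and a positive density have speed in $B'$. But a surviving trajectory of speed in $B$ and one of speed in $B'$ starting to its left would eventually cross, since the faster one (speed in $B'$) is to the left: this contradicts the fact that surviving trajectories are disjoint line segments that never collide. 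Hence the conditional law of $v_0$ given $\{0\hits\R\infty\}$ is supported on a single point, i.e.\ there is $v\in\R$ with $\P_\mu(v_0=v\mid 0\hits\R\infty)=1$.

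To make the crossing argument precise I would spell out the geometry: pick two Poisson points $x_j<x_k$ such that the particle from $x_j$ survives with speed in $B'=(\inf B',\sup B']$ and the particle from $x_k$ survives with speed in $B=[\inf B,\sup B]$, with $\sup B<\inf B'$; such a configuration has positive density by the two independent ergodic statements (the joint event has positive probability because survival to the right of $x_k$ and survival to the left of $x_j$ are independent once we fix the two speeds, by the product structure of the Poisson process, exactly as in the proof of Proposition~\ref{pro:symmetry} a)). The line $t\mapsto x_j+v' t$ with $v'>\sup B$ and the line $t\mapsto x_k+vt$ with $v\le\sup B$ and $x_j<x_k$ satisfy $x_j+v't = x_k+vt$ at time $t=(x_k-x_j)/(v'-v)>0$, so the two trajectories meet in the upper half-plane, which is impossible for two surviving (hence never-annihilated) particles.

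The main obstacle is the bookkeeping around ergodicity and the Palm conditioning: one must be careful that ``positive conditional probability of a speed value implies positive spatial density of surviving particles with that speed'' is applied to the stationary (not the Palm) version of the process, or equivalently that the relevant shift-invariant event has positive probability and hence probability~$1$. This is routine given the framework already used for Lemmas~\ref{lem:hulls} and~\ref{lem:only0}, and in fact the case of two opposite signs is exactly Lemma~\ref{lem:only0}; the present statement is the natural refinement, obtained by replacing ``sign'' with ``speed in a given half-line of $\R$''. Once the no-two-distinct-speeds fact is in hand, the deterministic-value conclusion is immediate: the conditional law of $v_0$ given survival is a probability measure on $\R$ that cannot charge two points simultaneously beyond the trivial case, hence is a Dirac mass at some $v$; and since conditionally on $\{0\hits\R\infty\}$ the speed $v_0$ is deterministic while unconditionally $v_0\sim\mu$, this $v$ must be an atom of $\mu$.
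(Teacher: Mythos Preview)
Your approach is essentially the same as the paper's: use ergodicity to deduce that if the conditional law of $v_0$ given survival charged two separated speed-ranges, then almost surely there would be a positive density of survivors in each range, forcing a fast survivor to sit to the left of a slow one and hence to collide with it. The paper phrases this with a single threshold $w$ (either $\nu((w,\infty))=0$ or $\nu((-\infty,w))=0$ for every $w$, and then takes $w$ to be the median of $\nu$), which is slightly slicker than your two-interval formulation, but the content is identical. One remark: your paragraph invoking independence of survival to the right of $x_k$ and to the left of $x_j$ is unnecessary (and a bit shaky, since the two particles share the configuration between them); once ergodicity gives positive density of each type of survivor on the whole line, the existence of a pair $x_j<x_k$ with the faster survivor on the left is automatic.
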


\begin{proof}
Assume $\P_\mu(0\hits\R\infty)>0$. Let $\nu$ denote the law of $v_0$ given $\{0\hits\R\infty\}$. Let $w\in\R$. If $\P_\mu(v_0>w,\,0\hits\R\infty)>0$ and $\P_\mu(v_0<w,\,0\hits\R\infty)>0$, then, by ergodicity, almost surely a positive density of particles survive and have a speed $>w$, and similarly with speed $<w$, which is contradicting their mutual survival. Therefore, either $\nu((w,+\infty))=0$ or $\nu((-\infty,w))=0$. Hence $\nu$ has to be a Dirac measure, for otherwise taking $w$ to be its median yields a contradiction.
\end{proof}

\section{Survival for 3-speeds distributions}\label{sec:3speeds}

Recall that $\P_p$ refers to $\P_\mu$ where $\mu=\frac{1-p}2\delta_{-1}+p\delta_0+\frac{1-p}2\delta_1$. 

\begin{proposition}
Assume $p>\frac13$. Then $\P_p(0\hits\R\infty)>0$.
\end{proposition}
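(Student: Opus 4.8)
The plan is to argue by contradiction, assuming $\P_p(0\hits\R\oo)=0$. Since $\P_p$ is the Palm measure, $\P_p(0\hits\R\oo)$ equals the intensity of the point process of surviving particles, so this process is a.s.\ empty: almost surely \emph{no} particle survives. Hence the dynamics defines an a.s.\ perfect matching $\sigma$ of $\Z$, where particle $k$ is paired with the particle it annihilates against, and $\sigma$ is invariant under the shifts of the point process. Two structural facts are immediate. First, viewing trajectories as segments in $\H$, the two segments of a matched couple bound a finite ``tent'', and distinct tents are disjoint (as already used in the proof of Lemma~\ref{lem:hulls}); so $\sigma$ is non-crossing. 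Second, if $\{i,j\}\in\sigma$ with $x_i<x_j$, then $v_i>v_j$: two particles meet only if the left one is strictly faster. Thus $v_i\in\{0,1\}$, $v_j\in\{0,-1\}$, $v_i\neq v_j$, and every couple has one of the three \emph{types} $(1,-1)$, $(1,0)$, $(0,-1)$.

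Now count. Let $a,b,c$ be the intensities (per unit length; they exist by ergodicity) of the left endpoints of couples of the three types. Matching the intensities of $1$-, $-1$- and $0$-particles gives $a+b=\tfrac{1-p}2$, $a+c=\tfrac{1-p}2$, $b+c=p$, hence $b=c=\tfrac p2$ and $a=\tfrac{1-2p}2$. Already $a\ge0$ forces $p\le\tfrac12$, which contradicts $p>\tfrac12$. To descend to the threshold $p>\tfrac13$ one needs the additional inequality $a\ge b$ — equivalently $\tfrac{1-2p}2\ge\tfrac p2$, i.e.\ $p\le\tfrac13$ — that is: at least half of the $1$-particles annihilate against $-1$-particles rather than against $0$-particles. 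The natural route is a (shift-equivariant, hence intensity-comparing) charging map sending each couple of type $(1,0)$ injectively to a couple of type $(1,-1)$: if $\{i,j\}\in\sigma$ has $x_i<x_j$ and type $(1,0)$, then for the $1$-particle from $x_i$ to reach $x_j$ all particles in $(x_i,x_j)$ must annihilate among themselves beforehand — in particular $v_{i+1}\neq-1$ (otherwise that particle reaches $x_j$ first, contradicting $\{i,j\}\in\sigma$) — and a careful look at the configuration just inside $(x_i,x_j)$ and just to the right of $x_j$ should locate the required $(1,-1)$-couple; the mirror analysis handles type $(0,-1)$.

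The main obstacle is that this charging \emph{cannot} be purely combinatorial: there are non-crossing, type-constrained matchings, even realizable by the dynamics, with $a<b$ and indeed $a=0$. For instance the block $1\,1\,0\,0$ annihilates completely for \emph{every} choice of positions, and $1\,0\,1\,0\,(-1)\,0$ does so exactly when $x_{5}-x_{4}<x_{4}-x_{3}$, i.e.\ for half the position-configurations. So the argument must genuinely use that the gaps $x_{k+1}-x_k$ are i.i.d.\ and independent of the speeds: when $p>\tfrac13$ one cannot, with positive density, simultaneously satisfy all the geometric inequalities on the gaps that complete local annihilation near $0$-particles requires, so a.s.\ some particle survives. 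Making this quantitative — isolating a clean family of independent ``coin-flips'' on the gaps such that a single failure forces a surviving particle, thereby yielding $a\ge b$ — is the delicate step; everything else (ergodicity, extraction of $\sigma$, the linear count) is soft.
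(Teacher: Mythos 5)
Your reduction to a translation-invariant perfect matching and the linear intensity count are correct, but, as you yourself observe, they only yield a contradiction for $p>\frac12$ (from $a=\frac{1-2p}2\ge0$). The entire content of the proposition in the range $\frac13<p\le\frac12$ is the inequality $a\ge b$, and that is precisely the step you do not prove: you sketch a charging map from $(1,0)$-couples to $(1,-1)$-couples, then immediately exhibit configurations such as $1\,1\,0\,0$ showing that no local or purely combinatorial injection can exist, and defer the ``quantitative'' version that would exploit the randomness of the gaps as ``the delicate step''. That step \emph{is} the theorem. Nothing in the proposal indicates how the i.i.d.\ gaps would be used to force $a\ge b$, and it is not even clear that $a\ge b$ has a direct structural proof: on the set of parameters where the hypothesis ``no particle survives'' can actually hold (conjecturally $p\le\frac14$), one has $a=\frac{1-2p}2\ge\frac38>\frac18\ge\frac p2=b$ with a lot of slack, so there is no tightness forcing a natural bijective mechanism. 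As written, the argument establishes the proposition only for $p>\frac12$.

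For comparison, the paper's proof is constructive rather than by contradiction: it explores the configuration on $\R_+$ from left to right along predictable stopping locations $K_n$, assigns to each step a sign $\eps_n$ recording the net change in the number of surviving $0$-speed particles minus surviving $(-1)$-speed particles, and dominates $\sum_{m\le n}\eps_m$ from below by a random walk with i.i.d.\ steps of law $\frac{1-p}2\delta_{-1}+p\delta_1+\frac{1-p}2\delta_{0}$. This walk has positive drift exactly when $p>\frac13$, and on the positive-probability event that it stays positive forever, the $0$-speed particle at the origin survives on $\R_+$; symmetry and independence of the two half-lines conclude. In effect, the exploration is the device that encodes the kind of comparison you are missing; to salvage your route you would need an honest mass-transport (or exploration) proof of $a\ge b$, which is not supplied.
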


Let us recall the following fact, that is a particular case of Proposition~\ref{pro:symmetry} (or of Lemma~\ref{lem:only0}, since $(+1)$-particles can only be hit by particles starting on their right).

\begin{lemma}\label{lem:1dies}
Assume $p>0$. Then $\P_p(0\hits{\R_+}\infty\s v_0=+1)=0$. 
\end{lemma}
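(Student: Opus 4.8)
The plan is to prove Lemma~\ref{lem:1dies} as a quick consequence of the symmetry machinery already developed, without reproving anything. The statement is that, for $p>0$, a $(+1)$-particle launched from $0$ almost surely annihilates in the system restricted to $\R_+$. The key structural observation is the one flagged in the parenthetical remark: a particle moving at speed $+1$ starting at $x_0=0$ can only ever collide with particles that start to its right \emph{and} are slower than it (either speed $0$ or $-1$); it can never be caught from behind, since no particle moves faster to the right. Consequently the fate of the $(+1)$-particle at $0$ in the half-line system is identical to its fate in the full-line system: the particles starting on $\R_-$ are irrelevant to it. In symbols, $\{0\hits{\R_+}\infty,\ v_0=+1\}=\{0\hits{\R}\infty,\ v_0=+1\}$ up to a null set, so $\P_p(0\hits{\R_+}\infty\s v_0=+1)=\P_p(0\hits{\R}\infty\s v_0=+1)$.

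\textbf{Concluding via Lemma~\ref{lem:only0}.} Once the half-line problem is identified with the full-line one, I would simply invoke Lemma~\ref{lem:only0}. The measure $\mu=\frac{1-p}2\delta_{-1}+p\delta_0+\frac{1-p}2\delta_1$ is symmetric, and for $p>0$ it is not $\delta_0$ (and in fact it is never $\delta_0$ for $p<1$; the case $p=1$ is degenerate but then there are no $(+1)$-particles at all, so the conditioning is vacuous and the statement holds trivially — or one simply restricts to $p\in(0,1)$, which is the case of interest). Lemma~\ref{lem:only0} gives $\P_\mu(0\hits{\R}\infty\s v_0\ne0)=0$; in particular $\P_p(0\hits{\R}\infty\s v_0=+1)=0$. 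Combining with the identification above yields $\P_p(0\hits{\R_+}\infty\s v_0=+1)=0$, which is the claim. Alternatively, one may cite Proposition~\ref{pro:symmetry} a) directly with $v=1$: it states $\P_\mu(0\hits{\R_+}\infty\s v_0=v)=0$ for every $v>0$, which is exactly the assertion.

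\textbf{The main point to be careful about.} The only step that deserves a line of justification is the equality of the two events, i.e.\ that adding the particles on $\R_-$ cannot save the $(+1)$-particle at $0$. This follows because any particle starting at some $x_i<0$ moves at speed in $\{-1,0,+1\}$ and therefore stays at position $\le x_i+t<0\le t$ at every time $t\ge0$ (the $(+1)$-particle at $0$ is at position $t$ and never slows down until a collision, and a left-starting particle can reach position $t$ only if it travels at speed $>1$, which is impossible). More carefully, before any collision involving it, the $(+1)$-particle at $0$ occupies position $t$ at time $t$, while every particle from $\R_-$ occupies a position $<t$; and removing the $\R_-$ particles only removes potential collisions among \emph{other} particles, which can only leave more right-starting particles alive to potentially hit the $(+1)$-particle — but it is the order of collisions that matters, so the cleanest phrasing is: the restriction of the dynamics to $\{x_i:i\ge0\}$ agrees with the full dynamics in the region $\{(x,t):x\ge t\ge0\}$, which contains the entire trajectory of the $(+1)$-particle at $0$ up to its annihilation. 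Hence its survival is the same in both systems. I do not expect any genuine obstacle here; the lemma is essentially a pointer, and the proof is two or three sentences.
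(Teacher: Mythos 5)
Your proposal is correct and follows the paper's own route: the paper dispatches this lemma with exactly the same one-line pointer, namely that it is a special case of Proposition~\ref{pro:symmetry}~a) (or of Lemma~\ref{lem:only0} combined with the observation that a $(+1)$-particle can only be hit by particles starting to its right, so restricting to $\R_+$ changes nothing for it). Your careful justification that the $\R_+$-restricted dynamics agrees with the full dynamics on the region $\{(x,t): x\ge t\ge 0\}$ is a valid and slightly more explicit version of that parenthetical remark.
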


\begin{proof}[Proof of the proposition]
The proof procedes by a definition of an exploration of the configuration on the right of~$0$. This exploration will produce a sequence of random locations $K_0,K_1,\ldots\in\Z$ along with random signs $\eps_0,\ldots$, and $\epst_0,\ldots$. The locations $K_n$ will be predictable stopping locations, in the sense that $\{K_n=k\}\in\Fr_{k-1}:=\sigma((x_j,v_j)\,;\,j=0,\ldots,k-1)$ for all $k\in\N$. And the signs $\eps_n$ will account for the number of surviving particles from $x_0$ to $x_{K_n}$ with speed either 0 or -1, cf.~\eqref{eqn:epsilon}. The signs $\epst_n$ are introduced for a technical reason explained later. 

Let $K_0=0$. Then, for $n\ge0$, given $K_n$, let us define $K_{n+1}$, $\eps_n$ and $\epst_n$ as follows:
\begin{itemize}
	\item if $v_{K_n}=0$, then $\eps_n=\epst_n=+1$ and $K_{n+1}=K_n+1$;
	\item if $v_{K_n}=+1$, then $\eps_n=\epst_n=0$, and Lemma~\ref{lem:1dies} ensures that there is $k>K_n$, such that $K_n\hits{[K_n,k]}k$. Let $k'$ be the least such $k$, and $K_{n+1}=k'+1$;
	\item if $v_{K_n}=-1$, then 
	\begin{itemize}
		\item if the particle at $x_{K_n}$ reaches $0$, i.e.\ $K_n\hits{\R_+}\infty$, then $\eps_n=\epst_n=-1$ and $K_{n+1}=K_n+1$;
		\item else, there is $i\in[0,K_n)$ such that $i\hits{[0,K_n]}K_n$. If $i$ was surviving before $K_n$, i.e.\ if $i\hits{[0,K_n)}\infty$, then $\eps_n=\epst_n=-1$ and $K_{n+1}=K_n+1$. Else, this means there is $j\in[0,i)$ such that $j\hits{[0,K_n)}i$ (and we must have $v_i=0$ and $v_j=+1$), in which case $j\hits{[0,K_n]}\infty$ and Lemma~\ref{lem:1dies} provides $k>K_n$ such that $j\hits{[0,k]}k$. Let $k'$ be the least such $k$, and $K_{n+1}=k'+1$. In this last sub-case, we let $\eps_n=0$ and $\epst_n=-1$. 
	\end{itemize}
\end{itemize}
This construction yields, by induction, that, for all $n\ge1$, the sequence $K_1,\ldots,K_n$ contains all the locations of particles that survive in the system restricted to particles in $[0,K_{n+1})$, that none of these surviving ones has speed +1, and moreover that
\begin{align}\label{eqn:epsilon}
\sum_{0\le m\le n}\eps_m = &
\#\text{\Big(0-speed particle surviving in $[0,K_{n+1})$\Big)}\\ - & \#\text{\Big((-1)-speed particle surviving in $[0,K_{n+1})$\Big)} \notag
\end{align}
or, in other words, 
\[\sum_{0\le m\le n}\eps_m = \sum_{\substack{k\in[0,K_{n+1}),\\ k\hits{[0,K_{n+1})}\infty}}(\ind{v_k=0}-\ind{v_k=-1}).\]
In particular, if $v_0=0$, then $0\hits{[0,K_{n+1})}\infty$ if and only if $\eps_0+\cdots+\eps_m>0$ for $m=0,\ldots,n$. 
However, 
\begin{equation}\label{eqn:eps_tilde}
\text{for all $n\in\N$},\qquad\eps_n\ge\epst_n,
\end{equation}
and the sequence $(\epst_n)_n$ is i.i.d.\ with law $\frac{1-p}2\delta_{-1}+p\delta_1+\frac{1-p}2\delta_{0}$. Indeed, $\epst_n $ is a function of $v_{K_n}$ and $(v_{K_n})_n$ is i.i.d.\ with law $\mu$ due to the construction (recall that $(K_n)_{n\ge0}$ are predictable stopping locations). 

Assuming $p>\frac13$, we have $\E_p[\epst_0]>0$, and a straightforward corollary of the law of large numbers implies that, with positive probability, $\epst_0+\cdots+\epst_n>0$ for all $n$. Due to~\eqref{eqn:eps_tilde}, we also have $\eps_0+\cdots+\eps_n>0$ for all $n$ with positive probability, hence $\P_p(0\hits{\R_+}\infty\s v_0=0)>0$. Due to symmetry, and independence between positive and negative half-axes, the proposition follows. 
\end{proof}

\subsection*{Remarks}
\begin{itemize}
	\item The argument can be modified so as to yield survival below $1/3$. One can indeed consider an exploration of locations 3 by 3, and similarly list the 27 situations and associate similar values $\eps_n\in\{-3,\ldots,3\}$ to each of them, \emph{except} to the only case $(+1,0,-1)$, which depending on interdistances can, with equal probability, contribute to $\eps_n=-1$ (if +1 and 0 meet first) or $\eps_n=0$ (if 0 and -1 meet first), which introduces a drift at $p=1/3$. This argument gives survival with positive probability for $p>0.32803...$
	\item The symmetry assumption can be dropped, if only survival on $\R_+$ is considered. Namely, assuming $\mu=q\delta_{-1}+p\delta_0+r\delta_1$ with $p>q$, if we further assume that $(+1)$-speed particles annihilate a.s.\ on $\R_+$ (as in the conclusion of Lemma~\ref{lem:1dies}), then the proof carries over exactly and proves that $\P_\mu(0\hits{\R_+}\infty\s v_0=0)>0$; but if on the contrary a $(+1)$-speed particle at $x_0$ survives with positive probability on $\R_+$, then substituting it for a $0$-speed particle obviously preserves survival on $\R_+$ hence we still have $\P_\mu(0\hits{\R_+}\infty\s v_0=0)>0$.
	\item Due to linear speed-change invariance (cf.~Section~\ref{sec:definitions}), the previous remark implies that if $\mu=p\delta_\alpha+q\delta_\sigma+r\delta_\beta$ with $\alpha<\sigma<\beta$, then the assumption $p>q$ implies $\P_\mu(0\hits{\R_+}\infty\s v_0=\sigma)>0$.
	\item For any law $\mu$ such that $\mu(\{0\})>1/2$, the survival of some 0-speed particles on $\R$ is easy: for instance, a $0$-speed particle at 0 survives the particles from $\R_+$ on the positive-probability event that $\sum_{k=1}^n(\ind{v_k=0}-\ind{v_k\ne0})>0$ for all $n\ge1$. 
\end{itemize}

\section*{Acknowledgments}

The authors wish to thank Gady Kozma for stimulating discussions, in particular regarding reference~\cite{aizenman-nachtergaele}, and Sidney Redner for letting them know of the literature on the model. 

\bibliographystyle{acm}
\bibliography{biblio}

\begin{thebibliography}{10}

\bibitem{aizenman-nachtergaele}
{\sc Aizenman, M., and Nachtergaele, B.}
\newblock Geometric aspects of quantum spin states.
\newblock {\em Communications in Mathematical Physics 164}, 1 (1994), 17--63.

\bibitem{belitsky1995ballistic}
{\sc Belitsky, V., and Ferrari, P.~A.}
\newblock Ballistic annihilation and deterministic surface growth.
\newblock {\em Journal of statistical physics 80}, 3-4 (1995), 517--543.

\bibitem{ben1993decay}
{\sc Ben-Naim, E., Redner, S., and Leyvraz, F.}
\newblock Decay kinetics of ballistic annihilation.
\newblock {\em Physical review letters 70}, 12 (1993), 1890.

\bibitem{carnevale1990statistics}
{\sc Carnevale, G., Pomeau, Y., and Young, W.}
\newblock Statistics of ballistic agglomeration.
\newblock {\em Physical review letters 64}, 24 (1990), 2913.

\bibitem{droz1995ballistic}
{\sc Droz, M., Rey, P.-A., Frachebourg, L., and Piasecki, J.}
\newblock Ballistic-annihilation kinetics for a multivelocity one-dimensional
  ideal gas.
\newblock {\em Physical Review E 51}, 6 (1995), 5541.

\bibitem{junge}
{\sc Dygert, B., Junge, M., Kinzel, C., Raymond, A., Slivken, E., and Zhu, J.}
\newblock The bullet problem with discrete speeds.
\newblock {\em arXiv preprint arXiv:1610.00282\/} (2016).

\bibitem{elskens1985annihilation}
{\sc Elskens, Y., and Frisch, H.~L.}
\newblock Annihilation kinetics in the one-dimensional ideal gas.
\newblock {\em Physical Review A 31}, 6 (1985), 3812.

\bibitem{ermakov1998some}
{\sc Ermakov, A., T{\'o}th, B., and Werner, W.}
\newblock On some annihilating and coalescing systems.
\newblock {\em Journal of statistical physics 91}, 5 (1998), 845--870.

\bibitem{ibm}
{\sc Kleber, M., and Wilson, D.}
\newblock ``{P}onder {T}his'' {IBM} research challenge.
\newblock
  \url{https://www.research.ibm.com/haifa/ponderthis/challenges/May2014.html},
  May 2014.

\bibitem{krapivsky1995ballistic}
{\sc Krapivsky, P., Redner, S., and Leyvraz, F.}
\newblock Ballistic annihilation kinetics: The case of discrete velocity
  distributions.
\newblock {\em Physical Review E 51}, 5 (1995), 3977.

\bibitem{krapivsky2001ballistic}
{\sc Krapivsky, P., and Sire, C.}
\newblock Ballistic annihilation with continuous isotropic initial velocity
  distribution.
\newblock {\em Physical review letters 86}, 12 (2001), 2494.

\bibitem{krug1988universality}
{\sc Krug, J., and Spohn, H.}
\newblock Universality classes for deterministic surface growth.
\newblock {\em Physical Review A 38}, 8 (1988), 4271.

\bibitem{martin1994one}
{\sc Martin, P.~A., and Piasecki, J.}
\newblock One-dimensional ballistic aggregation: Rigorous long-time estimates.
\newblock {\em Journal of statistical physics 76}, 1 (1994), 447--476.

\bibitem{piasecki1995ballistic}
{\sc Piasecki, J.}
\newblock Ballistic annihilation in a one-dimensional fluid.
\newblock {\em Physical Review E 51}, 6 (1995), 5535.

\bibitem{redner1997scaling}
{\sc Redner, S.}
\newblock Scaling theories of diffusion-controlled and ballistically controlled
  bimolecular reactions.
\newblock {\em Nonequilibrium statistical mechanics in one dimension 1\/}
  (1997).

\bibitem{trizac2002kinetics}
{\sc Trizac, E.}
\newblock Kinetics and scaling in ballistic annihilation.
\newblock {\em Physical review letters 88}, 16 (2002), 160601.

\end{thebibliography}

\end{document}